\renewcommand{\geq}{\geqslant}
\renewcommand{\leq}{\leqslant}
\newcommand{\norm}[1]{\left\Vert#1\right\Vert}
\newcommand{\NA}{\operatorname{NA}}
\newcommand{\supp}{\operatorname{supp}}
\newcommand{\1}[1]{\operatorname{\textbf{1}}}
\newtheorem{theorem}{Theorem}[section]
\newtheorem{lemma}[theorem]{Lemma}
\newtheorem{claim}[theorem]{Claim}
\newtheorem{corollary}[theorem]{Corollary}
\theoremstyle{definition}
\theoremstyle{remark}
\newtheorem{remark}[theorem]{Remark}
\numberwithin{equation}{section}
\def\fnote#1{\footnote}
\def\ignora#1{}
\def\n3#1{\left\vert  \! \left\vert \! \left\vert \, #1 \, \right\vert \!
  \right\vert \! \right\vert }
\newcommand{\pten}{\ensuremath{\widehat{\otimes}_\pi}}
\begin{document}

\title{ New examples of strongly subdifferentiable projective tensor products }

\author{ Abraham Rueda Zoca }\address{Universidad de Granada, Facultad de Ciencias. Departamento de An\'{a}lisis Matem\'{a}tico, 18071-Granada
(Spain)} \email{ abrahamrueda@ugr.es}
\urladdr{\url{https://arzenglish.wordpress.com}}

\subjclass[2020]{46B20, 46B28, 46M05}

\keywords {Strongly subdifferentiability; projective tensor product; projective norm attainment}

\maketitle

\markboth{ABRAHAM RUEDA ZOCA}{NEW EXAMPLES OF SSD PROJECTIVE TENSOR PRODUCTS}

\begin{abstract}
We prove that the norm of $X\pten Y$ is SSD if either $X=\ell_p(I)$ for $p>2$ and $Y$ is a finite-dimensional Banach space such that the modulus of convexity is of power type $q<p$ (e.g. if $Y^*$ is a subspace of $L_q$) or if $X=c_0(I)$ and $Y^*$ is any uniformly convex finite-dimensional Banach space. We also provide a characterisation of SSD elements of a projective tensor product which attain its projective norm in terms of a strengthening of the a local Bollob\'as property for bilinear mappings.
\end{abstract}

\section{Introduction}

In this paper we will study the strong subdifferentiability (SSD, for short) of projective tensor product spaces. Given a Banach space $X$, it is said that \textit{the norm of $X$ is SSD at $x\in S_X$}, or that \textit{$x$ is an SSD point}, if the one side limit
$$\lim\limits_{t\rightarrow 0^+} \frac{\Vert x+th\Vert-1}{t}$$
exists uniformly on $h\in S_X$. Observe that the norm of $X$ is Fr\'echet differentiable at $x$ if and only if it is simultaneously G\^ateaux differentiable and SSD at $x$. We say that (the norm of) $X$ is SSD if every point $x\in S_X$ is SSD. We refer the reader to \cite{aopr86,con96,fp,gmz} and references therein for background and nice characterisations of SSD. Among all the results of the above papers let us highlight \cite[Theorem 1.2]{fp} where it is proved that a point $x\in S_X$ is an SSD point if and only if given $\varepsilon>0$ there exists $\eta(\varepsilon)>0$ such that if $f\in B_{X^*}$ satisfies $f(x)>1-\varepsilon$ then there exists $g\in S_{X^*}$ such that $g(x)=1$ and $\Vert f-g\Vert<\varepsilon$. 

The above characterisation makes particularly interesting the study of SSD in projective tensor products. The reson for this interest is pointed out in \cite[Theorem 4.7]{dklm} and, in order to explain it properly, let us introduce a bit of notation from this paper. According to \cite[Definition 2.1, (a)]{dklm}, given two Banach spaces $X$ and $Y$, it is said that the pair $(X,Y)$ has \textit{the property $L_{p,p}$ for bilinear mappings} if, for every $(x,y)\in S_X\times S_Y$ and every $\varepsilon>0$, there exists $\eta(\varepsilon,x,y)>0$ such that if $B:X\times Y\longrightarrow \mathbb R$ is a norm-one bilinear mapping satisfying that $\vert B(x,y)\vert >1-\eta(\varepsilon,x,y)$ then there exists another norm-one bilinear mapping $\tilde B$ such that $\tilde B(x,y)=1$ and $\Vert B-\tilde B\Vert<\varepsilon$. This definition originally appeared motivated by a generalisation of a Bishop-Phelps-Bollob\'as property for bilinear mapping (we refer the reader to \cite[Section 1]{dklm} for more details). Now, taking into account the natural identification $(X\pten Y)^*=\mathcal B(X\times Y)$ (see Subsection~\ref{subsection:projectensor} for details) it is plain that the pair $(X,Y)$ has the property $L_{p,p}$ for bilinear mappings if and only if, the norm of $X\pten Y$ is SSD at every point of the set $\{x\otimes y: x\in S_X, y\in S_Y\}$ (it was also observed in \cite[Proposition 4.4]{djmr24}). Now, in the above mentioned \cite[Theorem 4.7]{dklm} it is proved that the pair $(\ell_p,\ell_q)$, $2<p,q<\infty$, has the $L_{p,p}$ for bilinear mappings from the fact that the dual has the $w^*$-Kadec-Klee property \cite[Theorem 4]{dk} and that this allows to conclude that the norm is SSD at every point \cite[Proposition 2.6]{dklmjmaa}. 

To the best of the author knownledge, the only known results about SSD in projective tensor products are the following ones.

\begin{enumerate}
\item If $X$ and $Y$ finite-dimensional then $X\pten Y$ is SSD at every point since it is finite-dimensional and it is clear that every point in a finite-dimensional space is SSD (see. e.g. \cite[Introduction]{gmz}).
\item $\ell_1^N\pten X$ is SSD if and only if, $X$ is SSD \cite[Theorem C]{djmr24}. This result follows because in this case $\ell_1^N\pten X=\ell_1^N(X)$ isometrically and by the characterisation of SSD norms in $\ell_1$-sums of spaces given in \cite[Proposition 2.2]{fp}.
\item $\ell_p\pten \ell_q$ is SSD if $2<p,q<\infty$ \cite[Corollary 2.8 (a)]{dklm}. This result follows since $(\ell_p\pten \ell_q)^*$ has the sequential $w^*$-Kadec-Klee property in this case \cite[Theorem 4]{dklm} and because, if $X$ is a Banach space such that $X^*$ has the sequential $w^*$-Kadec-Klee property then the norm of $X$ is SSD (see the proof of \cite[Theorem 2.7]{dklm}). This result was generalised to $\ell_p(I)\pten \ell_q(J)$ for arbitrary sets $I,J$ in \cite[Theorem 2.3]{rueda23} by making use of separable complementation arguments.
\item If $X\pten Y$ is SSD and either $X$ or $Y$ has the metric approximation property, then every bounded operator $T:X\longrightarrow Y^*$ is compact. In particular, if both $X$ and $Y$ are reflexive then $X\pten Y$ must be reflexive too \cite[Theorem 2.1]{rueda23}.
\end{enumerate}

The last point makes clear that the SSD on a projective tensor product is a restrictive property, whereas the rest of points show that the phenomenon of SSD in projective tensor products is not well understood. For instance, it is not known when $X\pten Y$ is SSD if $X$ is finite dimensional and $Y$ is reflexive.

The main aim of this paper is to provide new examples of projective tensor products whose norm is SSD and, consequently, deriving examples of new pairs of Banach spaces with the property $L_{p,p}$ for bilinear mappings. Indeed, in Theorem~\ref{theo:SSDlpyunifoconvexo} we prove that if $p>2$ and $Y$ is a finite-dimensional Banach space such that $Y^*$ has modulus of convexity of type $q$ with $q<p$, then the norm of $\ell_p(I)\pten Y$ is SSD, where $I$ is any infinite set. Furthermore, we prove in Theorem~\ref{theo:SSDc0denso} that if $I$ is an infinite set and $Y$ is a finite-dimensional smooth Banach space, then the norm of $c_0(I)\pten Y$ is SSD. In both cases, the strategy is going further and proving that the dual of the projective tensor product indeed satisfies the $w^*$-Kadec-Klee property. Finally, we provide characterisations of when an element which attains its projective norm is an SSD point in terms of a ``multiple $L_{p,p}$ condition'' on the elements of an optimal representation (see Theorems~\ref{theo:carassdpointfinito} and \ref{theo:carassdpointinfinito}).

\section{Notation and preliminary results}

For simplicity we will consider real Banach spaces. We denote by $B_X$ and $S_X$ the closed unit ball and the unit sphere, respectively, of the Banach space $X$. We denote by $\mathcal L(X, Y)$ the space of all bounded linear operators from $X$ into $Y$. If $Y = \mathbb R$, then $\mathcal L(X, \mathbb R)$ is denoted by $X^*$, the topological dual space of $X$. We also denote by $\mathcal B(X\times Y)$ the Banach space of all the bilinear and bounded mappings. 

For an abstract set $I$ we denote by $c_0(I)$ and $\ell_p(I)$ the classical generalisation of the sequence spaces $c_0$ and $\ell_p$. Given any subset $J\subseteq I$, we see $c_0(J)$ as a canonical $1$-complemented subspace of $c_0(I)$ as the space of all elements of $c_0(I)$ such that vanish on $I\setminus J$. Moreover, this canonical projection allows to consider the canonical decomposition
$$c_0(I):=\ell_\infty(J)\oplus_\infty c_0(I\setminus J).$$
In the same fashion we can consider $\ell_p(I):=\ell_p(J)\oplus_p \ell_p(I\setminus J)$ for $1\leq p<\infty$. We will make use of this identification without any further explicit mention in the rest of the text.

\subsection{Modulus of convexity}

Given a Banach space $X$, we denote the \textit{modulus of convexity $\delta_X(\varepsilon)$}, for $0<\varepsilon\leq 2$, by
\[\begin{split}
\delta_X(\varepsilon)& :=\left\{1-\frac{\Vert x+y\Vert}{2}: x,y\in X, \Vert x\Vert\leq 1, \Vert y\Vert\leq 1, \Vert x-y\Vert\geq \varepsilon \right\}
\end{split}\]
where the last inequality follows, for example, by \cite[Lemma 9.2]{checos}. $X$ is \textit{uniformly convex} if $\delta_X(\varepsilon)>0$ for every $\varepsilon>0$. 

We say that a Banach space $X$ has a modulus of convexity\textit{ of power type $q\geq 2$} if there exists $C>0$ such that $\delta_X(\varepsilon)\geq C\varepsilon^q$. An example of Banach spaces of power type of $q\geq 2$ is $L_q$. Moreover, if $1<p<2$, $L_p$ has a modulus of convexity of power type 2. See \cite[Appendix A]{beli} for more background and references. 

We also refer the reader to \cite[Chapter 9]{checos} for background on the basic connections between (uniform) convexity and (uniform) smoothness.

\subsection{Projective tensor product}\label{subsection:projectensor}

The projective tensor product of $X$ and $Y$, denoted by $X \pten Y$, is the completion of the algebraic tensor product $X \otimes Y$ endowed with the norm
$$
\|z\|_{\pi} := \inf \left\{ \sum_{n=1}^k \|x_n\| \|y_n\|: z = \sum_{n=1}^k x_n \otimes y_n \right\},$$
where the infimum is taken over all such representations of $z$. The reason for taking completion is that $X\otimes Y$ endowed with the projective norm is complete if and only if, either $X$ or $Y$ is finite dimensional (see \cite[P.43, Exercises 2.4 and 2.5]{ryan}).

It is well known that $\|x \otimes y\|_{\pi} = \|x\| \|y\|$ for every $x \in X$, $y \in Y$, and that the closed unit ball of $X \pten Y$ is the closed convex hull of the set $B_X \otimes B_Y = \{ x \otimes y: x \in B_X, y \in B_Y \}$. Throughout the paper, we will use of both facts without any explicit reference.

Observe that the action of an operator $G\colon X \longrightarrow Y^*$ as a linear functional on $X \pten Y$ is given by
$$
G \left( \sum_{n=1}^{k} x_n \otimes y_n \right) = \sum_{n=1}^{k} G(x_n)(y_n),$$
for every $\sum_{n=1}^{k} x_n \otimes y_n \in X \otimes Y$. This action establishes a linear isometry from $\mathcal L(X,Y^*)$ onto $(X\pten Y)^*$ (see e.g. \cite[Theorem 2.9]{ryan}). Observe also the natural linear isometry from $\phi:\mathcal L(X,Y^*)\longrightarrow \mathcal B(X\times Y)$ given by
$$\phi(T)(x,y):=T(x)(y)\ \ x\in X, y\in Y.$$
All along this paper we will use the isometric identifications $(X\pten Y)^*= \mathcal L(X,Y^*)=\mathcal B(X\times Y)$ without any explicit mention.

Furthermore, given two bounded operators $T\colon X\longrightarrow Z$ and $S\colon Y\longrightarrow W$, we can define an operator $T\otimes S\colon X\pten Y\longrightarrow Z\pten W$ by the action $(T\otimes S)(x\otimes y):=T(x)\otimes S(y)$ for $x\in X$ and $y\in Y$. It follows that $\Vert T\otimes S\Vert=\Vert T\Vert\Vert S\Vert$. Moreover, it is known that if $T,S$ are bounded projections then so is $T\otimes S$. Consequently, if $Z\subseteq X$ is a $1$-complemented subspace, then $Z\pten Y$ is a $1$-complemented subspace of $X\pten Y$ in the natural way (see \cite[Proposition 2.4]{ryan} for details).



\section{Main results}

Our first aim is to study the condition of SSD of $\ell_p(I)\pten Y$ for certain finite-dimensional space $Y$ and $p>2$. Let us start with the following result, which will allow us to reduce to a finite-dimensional subspace where we will be able to take advantage of the fact that finite dimensional spaces are SSD.

\begin{theorem}\label{theo:lpoperasopfin}
Let $2\leq q<p$ be numbers. Assume that $X$ has a modulus of convexity of power type $q$. Let $I$ be an infinite set. Then, for every $\varepsilon>0$ there exists $\eta>0$ satisfying that if $x\in S_{\ell_p(I)}$ is an element of finite support, say $\supp(x)= F$, and $T\in S_{\mathcal L(\ell_p(I),X)}$ satisfies that $\Vert T(x)\Vert>1-\eta$ then $\Vert T(y)\Vert<\varepsilon$ holds for every $y\in B_{\ell_p(I)}$ satisfying that $\supp(y)\subseteq I\setminus F$.
\end{theorem}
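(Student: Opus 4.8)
The plan is to exploit the disjointness of the supports of $x$ and $y$ together with the uniform convexity of $X$, turning the problem into a competition between two powers of a small parameter $t$. First I would reduce to the case $\Vert y\Vert=1$: if $y\in B_{\ell_p(I)}$ with $\supp(y)\subseteq I\setminus F$ and $y\neq 0$, then $\Vert T(y)\Vert=\Vert y\Vert\,\Vert T(y/\Vert y\Vert)\Vert\leq \Vert T(y/\Vert y\Vert)\Vert$, so it suffices to bound $\Vert T(\hat y)\Vert$ for unit vectors $\hat y$ supported in $I\setminus F$ (the case $y=0$ being trivial). The crucial elementary observation is that, since $\supp(x)=F$ and $\supp(y)\subseteq I\setminus F$ are disjoint, for every $t>0$ one has
\[\Vert x\pm t y\Vert_p^p=\Vert x\Vert_p^p+t^p\Vert y\Vert_p^p=1+t^p,\]
so, writing $M:=(1+t^p)^{1/p}$, we get $\Vert x+ty\Vert_p=\Vert x-ty\Vert_p=M$.

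Next I would feed this into the modulus of convexity of $X$. Set $\beta:=\Vert T(y)\Vert$ and assume $\beta>0$ (otherwise there is nothing to prove), and consider $u:=T(x+ty)$ and $v:=T(x-ty)$. Since $\Vert T\Vert=1$ we have $\Vert u\Vert,\Vert v\Vert\leq M$, while $u+v=2T(x)$ and $u-v=2t\,T(y)$. For $t\in(0,1]$ the vectors $u/M,v/M\in B_X$ satisfy $\Vert u/M-v/M\Vert=2t\beta/M\in(0,2]$, so the definition of $\delta_X$ applied to them, together with the power-type hypothesis $\delta_X(\varepsilon)\geq C\varepsilon^q$, yields
\[\frac{1-\eta}{M}<\frac{\Vert T(x)\Vert}{M}=\frac{\Vert u+v\Vert}{2M}\leq 1-\delta_X\!\left(\frac{2t\beta}{M}\right)\leq 1-C\left(\frac{2t\beta}{M}\right)^q.\]
Rearranging gives the key inequality
\[C\,(2t\beta)^q< M^{q-1}(M-1+\eta).\]

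Finally I would choose the parameters so that this inequality forces $\beta<\varepsilon$. For $t\in(0,1]$ we have $M\leq 2^{1/p}$, hence $M^{q-1}<2$, and by Bernoulli's inequality $M-1\leq t^p/p$; therefore $M^{q-1}(M-1+\eta)<2t^p/p+2\eta$. Since $q<p$, the term $t^q$ dominates $t^p$ as $t\to0^+$, so I can fix $t\in(0,1]$ small enough (depending only on $\varepsilon,C,p,q$) that $2t^p/p\leq \tfrac12 C(2t\varepsilon)^q$. Then, assuming for contradiction that $\beta\geq\varepsilon$, the key inequality gives
\[C(2t\varepsilon)^q\leq C(2t\beta)^q< 2\frac{t^p}{p}+2\eta\leq \tfrac12 C(2t\varepsilon)^q+2\eta,\]
forcing $\eta>\tfrac14 C(2t\varepsilon)^q$. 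Choosing $\eta:=\tfrac14 C(2t\varepsilon)^q$, which depends only on $\varepsilon$ and not on $x$, $T$, $F$ or $I$, produces the desired contradiction. The only genuine obstacle is making this competition between exponents explicit: disjoint supports force the ``flat'' growth $\Vert x\pm ty\Vert\approx 1+t^p/p$ of order $t^p$ along $y$, while uniform convexity of the target produces a separation of order $t^q$; the hypothesis $q<p$ is precisely what lets the convexity gain beat the norm growth for small $t$, and everything else is routine bookkeeping.
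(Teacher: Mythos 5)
Your proof is correct, and it follows essentially the same route as the paper: disjoint supports give $\Vert x\pm ty\Vert=(1+t^p)^{1/p}$, the power-type modulus of convexity applied to $T(x\pm ty)$ forces $\Vert T(y)\Vert$ to be small, and the hypothesis $q<p$ is what makes the $t^q$ convexity gain beat the $t^p$ norm growth. The only differences are bookkeeping (you argue by contradiction using Bernoulli's inequality, while the paper picks an intermediate exponent $q<r<p$ and computes the bound directly), and your $\eta$ is likewise independent of $x$, $F$ and $T$, which is the point of the statement.
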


\begin{proof}
Since $X$ has modulus of convexity of power type $q$ then there exists $C>0$ satisfying $\delta_X(\varepsilon)\geq C\varepsilon^q$ holds for every $\varepsilon>0$.

Let $\varepsilon>0$. Since $p>q$ select any $q<r<p$ and choose $0<\delta<1$ small enough so that $\frac{(1+\delta^p)^\frac{1}{p}}{2 C^\frac{1}{q}}  \delta^{\frac{r}{q}-1}<\varepsilon$. Since $\delta<1$ and $r<p$ we get $\delta^p<\delta^r$. Consequently we can choose $\eta>0$ small enough to guarantee $(1-\eta)(1-\delta^p)>1-\delta^r$.

Let $T\in S_{\mathcal L(\ell_p(I),X)}$ be such that $\Vert T(x)\Vert>1-\eta$.

Set $y\in B_{\ell_p(I)}$ with $\supp(y)\subseteq I\setminus F$. By the disjointness of the supports of $x$ and $y$ we get
$$\Vert x\pm \delta y\Vert\leq (1+\delta^p)^\frac{1}{p}.$$
Hence 
$$1\geq \left\Vert \frac{T(x\pm\delta y)}{(1+\delta^p)^\frac{1}{p}} \right\Vert.$$
Define $u:=\frac{T(x+\delta y)}{(1+\delta^p)^\frac{1}{p}}$ and $v:=\frac{T(x-\delta y)}{(1+\delta^p)^\frac{1}{p}}$. Now the above inequality implies that $u,v\in B_X$ and clearly $\frac{u+v}{2}=\frac{T(x)}{(1+\delta^p)^\frac{1}{p}}$.

\begin{claim} 
$$\frac{1}{(1+\delta^p)^\frac{1}{p}}\geq 1-\delta^p.$$
\end{claim}
\begin{proof}[Proof of the claim]
Define $f:[0,\delta]\longrightarrow \mathbb R$ by $f(t):=\frac{1}{(1+t^p)^\frac{1}{p}}=(1+t^p)^\frac{-1}{p}$. Note that
$$f'(t)=-\frac{1}{p}(1+t^p)^{-\frac{1}{p}-1}pt^{p-1}=-\frac{t^{p-1}}{(1+t^p)^\frac{p+1}{p}}\ \forall t\in [0,\delta].$$
Observe that, given any $t\in [0,\delta]$ then $(1+t^p)^\frac{p+1}{p}\geq 1$ since $\frac{p+1}{p}\geq 0$. Consequently, given $t\in [0,\delta]$ we infer
$$f'(t)=-\frac{t^{p-1}}{(1+t^p)^\frac{p+1}{p}}\geq -t^{p-1}\geq -\delta^{p-1}.$$
By Mean Value Theorem there exists $\xi\in (0,\delta)$ such that 
$$f(\delta)-f(0)=f'(\xi)\delta\geq -\delta^{p-1}\delta=-\delta^p.$$
So
$$f(\delta)=\frac{1}{(1+\delta^p)^\frac{1}{p}}\geq f(0)-\delta^p=1-\delta^p.$$
\end{proof}
Now we can estimate the following norm
\[\begin{split}\left\Vert \frac{u+v}{2}\right\Vert=\frac{\Vert T(x)\Vert}{(1+\delta^p)^\frac{1}{p}}& \geq \Vert T(x)\Vert(1-\delta^p)\geq (1-\eta)(1-\delta^p)\\
& > 1-\delta^r=1-C \left(\frac{\delta^\frac{r}{q}}{C^\frac{1}{q}} \right)^q\\
& \geq 1-\delta_X\left(\frac{\delta^\frac{r}{q}}{C^\frac{1}{q}}\right).
\end{split}\]
Since $u,v\in B_X$ the unique possibility with the above inequality is that $\Vert u-v\Vert<\frac{\delta^\frac{r}{q}}{C^\frac{1}{q}}$. Now
$$u-v=\frac{2\delta T(y)}{(1+\delta^p)^\frac{1}{p}}.$$
Consequently we get
$$\Vert T(y)\Vert = (1+\delta^p)^\frac{1}{p}\frac{\Vert u-v\Vert}{2\delta}<\frac{(1+\delta^p)^\frac{1}{p}}{2 C^\frac{1}{q}}  \delta^{\frac{r}{q}-1}<\varepsilon.$$
The arbitrariness of $y\in B_{\ell_p(I)}$ with $\supp(y)\subseteq I\setminus F$ finishes the proof.
\end{proof}

The independence of $\eta$ on the finitely-supported element in the above theorem even allows to remove the restriction of taking finitely supported elements in the following sense.

\begin{corollary}\label{cor:corolpoperasopofinito}
Let $2\leq q<p$ be numbers. Assume that $X$ has a modulus of convexity of power type $q$. Let $I$ be an infinite set. Then, for every $\varepsilon>0$ there exists $\eta>0$ such that for every $x\in S_{\ell_p(I)}$ there exists a finite set $F\subseteq I$ satisfying the following: if $T\in S_{\mathcal L(\ell_p(I),X)}$ satisfies $\Vert T(x)\Vert>1-\eta$ then $\Vert T(y)\Vert<\varepsilon$ holds for every $y\in B_{\ell_p(I)}$ such that $\supp(y)\subseteq I\setminus F$.
\end{corollary}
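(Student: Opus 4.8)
The plan is to reduce to the finitely supported case already settled in Theorem~\ref{theo:lpoperasopfin} by truncating $x$, exploiting that the $\eta$ produced there is independent of the finitely supported vector. Concretely, given $\varepsilon>0$ I would first apply Theorem~\ref{theo:lpoperasopfin} to obtain a single $\eta_0>0$ that works simultaneously for all finitely supported unit vectors, and then set $\eta:=\eta_0/2$. This $\eta$ is fixed once and for all; only the finite set $F$ will be allowed to depend on the given $x$.

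Now let $x\in S_{\ell_p(I)}$ be arbitrary. Since $\sum_{i\in I}\abs{x(i)}^p=1<\infty$, I can choose a finite set $F\subseteq I$ so that the restriction $x_F$ of $x$ to $F$ satisfies $\Vert x-x_F\Vert<\eta_0/2$. Writing $w:=x-x_F$, the disjointness of the supports of $x_F$ and $w$ gives $\Vert x_F\Vert^p=1-\Vert w\Vert^p$, hence $\Vert x_F\Vert>0$, so I may form the finitely supported unit vector $\hat x:=x_F/\Vert x_F\Vert$, whose support is contained in $F$.

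The key verification is that any $T\in S_{\mathcal L(\ell_p(I),X)}$ with $\Vert T(x)\Vert>1-\eta$ also almost attains its norm at $\hat x$. Using $\Vert T\Vert=1$ I would estimate $\Vert T(x_F)\Vert\geq \Vert T(x)\Vert-\Vert T(w)\Vert\geq (1-\eta)-\Vert w\Vert$, and then, since $\Vert x_F\Vert\leq 1$, $\Vert T(\hat x)\Vert=\Vert T(x_F)\Vert/\Vert x_F\Vert\geq \Vert T(x_F)\Vert>1-\eta_0$, where the last inequality uses $\eta+\Vert w\Vert<\eta_0/2+\eta_0/2=\eta_0$. Applying Theorem~\ref{theo:lpoperasopfin} to $\hat x$ then yields $\Vert T(y)\Vert<\varepsilon$ for every $y\in B_{\ell_p(I)}$ with $\supp(y)\subseteq I\setminus\supp(\hat x)$. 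Since $\supp(\hat x)\subseteq F$, any $y$ with $\supp(y)\subseteq I\setminus F$ satisfies $\supp(y)\subseteq I\setminus\supp(\hat x)$, so the conclusion holds for the chosen $F$.

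I do not expect a serious obstacle: the argument is essentially bookkeeping. The points that need care are keeping $\eta$ uniform in $x$ while letting $F$ depend on $x$ (which is exactly what the uniformity of $\eta_0$ in Theorem~\ref{theo:lpoperasopfin} permits), and the final support-containment step that transfers the estimate from the complement of $\supp(\hat x)$ to the complement of the possibly larger set $F$.
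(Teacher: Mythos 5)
Your proposal is correct and is essentially the paper's own argument: both exploit the uniformity of $\eta$ in Theorem~\ref{theo:lpoperasopfin}, approximate $x$ by a finitely supported unit vector (the paper picks an arbitrary $x'$ with $\Vert x-x'\Vert<\eta_0/4$, you truncate and renormalise), and transfer the conclusion from the complement of the approximant's support to the complement of $F$. The only differences are the harmless choice of constants and the (easily justified) observation that $\Vert x_F\Vert>0$, so nothing further is needed.
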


\begin{proof}
Given $\varepsilon>0$ then we can find $\eta>0$ witnessing the property of Theorem \ref{theo:lpoperasopfin}. Now given any $x\in S_{\ell_p(I)}$, consider $x'\in S_{\ell_p(I)}$ of finite support with $\Vert x-x'\Vert<\frac{\eta}{4}$. Set $F:=\supp(x')$. Now if $T\in B_{\ell_p(I,X)}$ satisfies $\Vert T(x)\Vert>1-\frac{\eta}{4}$ then $\Vert T(x')\Vert>1-\frac{\eta}{2}>1-\eta$, and the property witnessing $\eta$ forces $\Vert T(y)\Vert<\varepsilon$ to hold for every $y\in B_{\ell_p(I)}$ with $\supp(y)\subseteq I\setminus F$.
\end{proof}

Now we are able to prove one of the main results of the paper. As we pointed out in the Introduction, the tool for obtaining the SSD in a projective tensor product will be to get that its dual space satisfies the $w^*$-Kadec-Klee property, for which we will introduce necessary notation. Following \cite[Section 1]{dk}, we say that the dual of a Banach space $X$ has the \textit{$w^*$-Kadec-Klee property} if whenever $(x_s^*)$ is a net in $S_{X^*}$ satisfying that $x_s^*\rightarrow^{w^*} x^*\in S_{X^*}$ then $\Vert x_s^*-x^*\Vert\rightarrow 0$. This is equivalent to saying that the $w^*$ topology and the norm topology agree on $S_{X^*}$. If, in the above definition, we consider sequences instead of nets, we say that $X^*$ has the \textit{sequential $w^*$-Kadec-Klee property}. See \cite{bv,hata} for background about these properties.

Our interest on the $w^*$-Kadec-Klee property comes from \cite[Proposition 2.6]{dklmjmaa}, where it is proved that if a Banach space $X$ satisfies that $X^*$ has the sequential $w^*$-Kadec-Klee property then the norm of $X$ is SSD.

Now we can establish the promised result.

\begin{theorem}\label{theo:SSDlpyunifoconvexo}
Let $p>2$, $I$ be any infinite set and let $Y$ be a finite-dimensional Banach space such that $Y^*$ has modulus of convexity of type $q$ with $q<p$. Then the norm of $\ell_p(I)\pten Y$ is SSD. Indeed, $(\ell_p(I)\pten Y)^*$ has the $w^*$-Kadec-Klee property.
\end{theorem}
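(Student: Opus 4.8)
The plan is to prove the stronger assertion, namely that the dual $(\ell_p(I)\pten Y)^*=\mathcal L(\ell_p(I),Y^*)$ has the $w^*$-Kadec-Klee property; since this implies the sequential version, \cite[Proposition 2.6]{dklmjmaa} then gives that the norm of $\ell_p(I)\pten Y$ is SSD. So I would fix a net $(T_s)$ in $S_{\mathcal L(\ell_p(I),Y^*)}$ with $T_s\to^{w^*}T\in S_{\mathcal L(\ell_p(I),Y^*)}$ and aim to show $\Vert T_s-T\Vert\to 0$.

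The first step is to unwind the $w^*$-convergence. Because the elementary tensors $x\otimes y$ span a dense subspace of the predual $\ell_p(I)\pten Y$ and the net is bounded, $T_s\to^{w^*}T$ is equivalent to $T_s(x)(y)\to T(x)(y)$ for all $x\in\ell_p(I)$ and $y\in Y$. Since $Y$ is finite-dimensional, $Y^*$ is finite-dimensional, so for each fixed $x$ this is exactly norm convergence $T_s(x)\to T(x)$ in $Y^*$. Thus I have pointwise (norm) convergence of the operators and must upgrade it to uniform convergence on $B_{\ell_p(I)}$.

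The core idea is to localize the norm of $T$ to a finite set and split the unit ball accordingly. Fix $\varepsilon>0$ and let $\eta>0$ be the constant from Corollary~\ref{cor:corolpoperasopofinito} applied with $X=Y^*$, whose modulus of convexity has power type $q<p$. Choose $x_0\in S_{\ell_p(I)}$ with $\Vert T(x_0)\Vert>1-\eta/2$, and let $F\subseteq I$ be the finite set that the corollary associates to $x_0$. Since $T_s(x_0)\to T(x_0)$, eventually $\Vert T_s(x_0)\Vert>1-\eta$, so the corollary applies to $T$ and, eventually, to every $T_s$: for all $y\in B_{\ell_p(I)}$ with $\supp(y)\subseteq I\setminus F$ one has $\Vert T(y)\Vert<\varepsilon$ and $\Vert T_s(y)\Vert<\varepsilon$. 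Now decompose an arbitrary $x\in B_{\ell_p(I)}$ as $x=x_F+x_{F^c}$ along $\ell_p(I)=\ell_p(F)\oplus_p\ell_p(I\setminus F)$, noting $x_F,x_{F^c}\in B_{\ell_p(I)}$. For the tail term, $\Vert(T_s-T)(x_{F^c})\Vert\le\Vert T_s(x_{F^c})\Vert+\Vert T(x_{F^c})\Vert<2\varepsilon$. For the finite part, the restrictions $\restr{T_s}{\ell_p(F)}$ and $\restr{T}{\ell_p(F)}$ lie in the \emph{finite-dimensional} space $\mathcal L(\ell_p(F),Y^*)$, where pointwise convergence on the finitely many basis vectors $(e_i)_{i\in F}$ (which I already have) forces operator-norm convergence; hence eventually $\Vert(T_s-T)(x_F)\Vert<\varepsilon$ uniformly in $x_F\in B_{\ell_p(F)}$.

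Combining the two estimates yields $\Vert(T_s-T)(x)\Vert<3\varepsilon$ for every $x\in B_{\ell_p(I)}$ once the net is far enough along, i.e.\ $\limsup_s\Vert T_s-T\Vert\le 3\varepsilon$, and letting $\varepsilon\to 0$ finishes the argument. The step I expect to carry the real weight is the localization: everything hinges on Corollary~\ref{cor:corolpoperasopofinito}, which guarantees that near-attainment of the norm of an operator at $x_0$ forces its values on the complementary tail to be uniformly small — this is precisely where the power-type convexity $q<p$ enters, and it is the only genuinely quantitative ingredient. Once the tail is controlled, the finite part is routine, because finite-dimensionality converts the hypothesized $w^*$-convergence into norm convergence for free.
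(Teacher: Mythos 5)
Your proposal is correct and follows essentially the same route as the paper: control of the tail via the power-type convexity estimate (you invoke Corollary~\ref{cor:corolpoperasopofinito}, the paper uses Theorem~\ref{theo:lpoperasopfin} directly on a finitely supported near-maximizer, which amounts to the same thing), norm convergence of the restrictions to the finite-dimensional block $\mathcal L(\ell_p(F),Y^*)$, and the $\ell_p(F)\oplus_p\ell_p(I\setminus F)$ splitting of an arbitrary unit vector. The only cosmetic difference is that you argue directly ($\limsup_s\Vert T_s-T\Vert\leq 3\varepsilon$) while the paper runs the identical estimate as a proof by contradiction.
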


\begin{proof}
Assume by contradiction that $(\ell_p(I)\pten Y)^*=\mathcal L(\ell_p(I),Y^*)$ fails the $w^*$-Kadec-Klee property. Then there exists $T\in S_{\mathcal L(\ell_p(I),Y^*)}$ and a net  $(T_s)\subseteq S_{\mathcal L(\ell_p(I),Y^*)}$ such that $(T_s)\rightarrow^{w^*} T$ but $(T_s)$ does not converge to $T$ in norm. Consequently, up to taking a subnet, we can assume that there exists $\varepsilon_0>0$ such that $\Vert T_s-T\Vert> \varepsilon_0$ holds for every $s\in S$.

By Theorem~\ref{theo:lpoperasopfin} we can find $\eta>0$ with the property that if $x$ is a finitely supported element of $B_{\ell_p(I)}$ and $G\in B_{\mathcal L(\ell_p(I),Y^*)}$ is such that $G(x)>1-\eta$ then $\Vert G(y)\Vert<\frac{\varepsilon_0}{4}$ holds for every $y\in B_{\ell_p(I)}$ with $\supp(y)\subseteq I\setminus\supp(x)$.

Select $x\in B_{\ell_p(I)}$ of finite support (say $F:=\supp(x)$) such that $\Vert T(x)\Vert>1-\eta$. Now there exists $s_1\in S$ such that $\Vert T_s(x)\Vert>1-\eta$ holds for every $s\geq s_1$. Indeed, select $y\in S_Y$ such that $T(x\otimes y)=T(x)(y)>1-\eta$. Since $T_s\rightarrow T$ in the weak star topology, in particular, $T_s(x)(y)=T_s(x\otimes y)\rightarrow T(x)(y)>1-\eta$ and, consequently, there exists $s_1\in S$ such that $s\geq s_1$ implies $1-\eta<T_s(x)(y)\leq \Vert T_s(x)\Vert$.

This implies that $s\geq s_1$ forces $\Vert T_s(y)\Vert<\frac{\varepsilon_0}{4}$ to hold for every $y\in B_{\ell_p(I)}$ such that $\supp(y)\subseteq I\setminus F$.

Now consider $S_s:=T_{s|\ell_p(F)}:\ell_p(F)\longrightarrow Y^*$ and $S:=T_{|\ell_p(F)}$. It is immediate that $(S_s)\rightarrow S$ in the $w^*$ topology of $(\ell_p(F)\pten Y)^*$ (observe that $\ell_p(F)\pten Y$ is isometrically a subspace of $\ell_p(I)\pten Y$ because $\ell_p(F)$ is a $1$-complemented subspace of $\ell_p(I)$). Since $(\ell_p(F)\pten Y)^*$ is finite dimensional we infer that $\Vert S_s-S\Vert\rightarrow 0$. Consequently we can find $s_2\in S$ such that $s_2\geq s_1$ and $s\geq s_2$ implies 
$$\frac{\varepsilon_0}{4}>\Vert S_s-S\Vert=\Vert T_{s|\ell_p(F)}-T_{|\ell_p(F)}\Vert.$$
Let us derive a contradiction from the above inequality. In order to do so, take $s\geq s_2$. Since $\Vert T_s-T\Vert\geq \varepsilon_0$ by the assumptions we can find $z\in B_{\ell_p(I)}$ such that $\varepsilon_0<\Vert T_s(z)-T(z)\Vert$. Now the decomposition $\ell_p(I)=\ell_p(F)\oplus_p \ell_p(I\setminus F)$ allows to find $z_F,z_C\in B_{\ell_p(I)}$ such that $\supp(z_F)\subseteq F, \supp(z_C)\subseteq I\setminus F, z=z_F+z_C$ and $\Vert z\Vert^p=\Vert z_F\Vert^p+\Vert z_C\Vert^p$.

Now, on the one hand, $T_s(z_F)=T_{s|\ell_p(F)}(z_F)=S_s(z_F)$. Similarly $T(z_F)=S(z_F)$, so we get
$$\Vert (T_s-T)(z_F)\Vert=\Vert (S_s-S)(z_F)\Vert\leq \Vert S_s-S\Vert \Vert z_F\Vert<\frac{\varepsilon_0}{4}\Vert z_F\Vert.$$

On the other hand, since $\supp(z_C)\subseteq I\setminus F$ we get $\Vert T_s(z_C)\Vert<\frac{\varepsilon_0}{4}$ and $\Vert T(z_C)\Vert<\frac{\varepsilon_0}{4}$.
Puting all together we get
\[
\begin{split}
\varepsilon_0<\Vert (T_s-T)(z)\Vert & =\Vert (T_s-T)(z_F+z_C)\Vert\\
& \leq \Vert (T_s-T)(z_F)\Vert+\Vert T_s(z_C)\Vert+\Vert T(z_C)\Vert\\
& <\frac{\varepsilon_0}{4}\Vert z_F\Vert+\frac{\varepsilon_0}{4}+\frac{\varepsilon_0}{4}<\frac{3\varepsilon_0}{4}<\varepsilon_0,
\end{split}
\]
a contradiction. Consequently, we deduce that $(\ell_p(I)\pten Y)^*$ has the $w^*$-Kadec-Klee property.

To conclude that the norm of $\ell_p(I)\pten Y$ is SSD it remains to apply \cite[Proposition 2.6]{dklmjmaa}.\end{proof}

Observe that given any Banach space $X$ and any subspace $Y\subseteq X$, it is immediate that $\delta_Y(\varepsilon)\geq \delta_X(\varepsilon)$. Hence, we get the following corollary.

\begin{corollary}\label{cor:aplilp(I)dualuniconv}
Let $I$ be an infinite set, $p,q$ two numbers such that $p>\max\{2,q\}$. If $Y$ is a finite-dimensional subspace of any $L_q(\mu)$ then the norm of $\ell_p(I)\pten Y^*$ is SSD.    
\end{corollary}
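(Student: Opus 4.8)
The plan is to obtain this as a direct application of Theorem~\ref{theo:SSDlpyunifoconvexo}, taking the finite-dimensional space appearing there to be $Y^*$ rather than $Y$. To invoke that theorem I must check two things: that $p>2$, and that the dual of the chosen finite-dimensional factor, namely $(Y^*)^*$, has a modulus of convexity of power type $q'$ for some $q'<p$. The first is immediate, since $p>\max\{2,q\}\geq 2$.

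The essential point for the second is finite-dimensionality. Because $\dim Y<\infty$, the canonical embedding gives an isometric identification $(Y^*)^*=Y^{**}=Y$, so it suffices to exhibit a power type $q'<p$ for $Y$ itself. Here I would combine two facts recorded earlier in the paper. First, $L_q(\mu)$ has modulus of convexity of power type $\max\{2,q\}$ (power type $q$ when $q\geq 2$, and power type $2$ when $1<q<2$). Second, the remark immediately preceding the statement, that passing to a subspace can only increase the modulus of convexity, i.e.\ $\delta_Y(\varepsilon)\geq \delta_{L_q(\mu)}(\varepsilon)$ whenever $Y\subseteq L_q(\mu)$. Chaining these, $Y$ inherits a modulus of convexity of power type $q':=\max\{2,q\}\geq 2$.

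Finally, since $p>\max\{2,q\}=q'$, the hypotheses of Theorem~\ref{theo:SSDlpyunifoconvexo} are met with $Y^*$ in the role of the finite-dimensional factor (note $Y^*$ is finite-dimensional as $Y$ is), and I conclude that the norm of $\ell_p(I)\pten Y^*$ is SSD (in fact that its dual has the $w^*$-Kadec-Klee property).

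I do not expect a genuine obstacle, as the statement is a formal consequence of the preceding theorem and remark. The only places demanding care are the correct matching of roles --- the ``$Y$'' of the theorem is the present $Y^*$, so the relevant bidual is $Y^{**}$ --- and the use of finite-dimensional reflexivity to replace $Y^{**}$ by $Y$; without $\dim Y<\infty$ one could not cleanly transfer the power-type estimate from $L_q(\mu)$ to the space whose convexity the theorem actually constrains.
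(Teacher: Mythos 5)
Your argument is correct and is exactly the paper's intended proof: the corollary follows from Theorem~\ref{theo:SSDlpyunifoconvexo} applied with $Y^*$ as the finite-dimensional factor, using finite-dimensional reflexivity $(Y^*)^*=Y$, the monotonicity $\delta_Y(\varepsilon)\geq\delta_{L_q(\mu)}(\varepsilon)$ stated just before the corollary, and the fact that $L_q(\mu)$ has modulus of convexity of power type $\max\{2,q\}<p$. (Both you and the paper tacitly assume $q>1$ so that $L_q(\mu)$ is uniformly convex; this is a shared implicit hypothesis, not a gap in your reasoning relative to the paper's.)
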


Roughly speaking, the key in Theorem~\ref{theo:SSDlpyunifoconvexo} is to consider spaces $\ell_p(I)$ which are ``less uniformly convex'' than the codomain space $Y^*$ (and this is crystallized by the condition of the power type on the corresponding modulus of convexity). Taking this idea beyond, in the following theorem we will replace $\ell_p(I)$ with $c_0(I)$, which even fails to be uniformly convex. In this case, however, we will not need any special behaviour of $Y^*$ else than merely being uniformly convex.

Let us begin with a version of Theorem~\ref{theo:lpoperasopfin} for $c_0(I)$. Even though the result is essentially known by speciallists (for instance see \cite[Lemma 2.11]{dklmjmaa}), we will include a complete proof for the sake of completeness and in order to get it written in the same terms as in Theorem~\ref{theo:SSDc0denso}.

\begin{lemma}\label{lemma:c0sopofinuniconvex}
Let $I$ be an infinite set and let $X$ be a uniformly convex Banach space. Then, for every $\varepsilon>0$ there exists $\eta>0$ satisfying that if $x\in S_{c_0(I)}$ is an element of finite support, say $\supp(x)= F$, and $T\in S_{\mathcal L(c_0(I),X)}$ satisfies that $\Vert T(x)\Vert>1-\eta$ then $\Vert T(y)\Vert<\varepsilon$ holds for every $y\in B_{c_0(I)}$ satisfying that $\supp(y)\subseteq I\setminus F$.    
\end{lemma}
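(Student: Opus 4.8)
The plan is to adapt the argument of Theorem~\ref{theo:lpoperasopfin} to the $c_0(I)$ setting. The key structural facts I will exploit are: first, that $c_0(I)$ decomposes isometrically as $\ell_\infty(F)\oplus_\infty c_0(I\setminus F)$ for a finite set $F$, so that for $x$ supported on $F$ and $y$ supported on $I\setminus F$ one has $\Vert x\pm y\Vert=\max\{\Vert x\Vert,\Vert y\Vert\}$; and second, that $X$ is uniformly convex, so that $\delta_X(\varepsilon)>0$ for every $\varepsilon>0$ even though we no longer have a power-type rate. The absence of a power-type estimate is precisely why, unlike in Theorem~\ref{theo:lpoperasopfin}, I do not need to exploit any quantitative gap between moduli: with the $\sup$-norm the perturbation $x\pm \delta y$ costs \emph{nothing} as long as $\delta\le 1$, which is the fundamental simplification the $c_0$ geometry gives us.

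Concretely, given $\varepsilon>0$, the first step is to feed $\varepsilon$ into uniform convexity and set $\eta:=\delta_X(\varepsilon)>0$ (perhaps after a harmless rescaling). Now fix a finitely supported $x\in S_{c_0(I)}$ with $\supp(x)=F$, take $T\in S_{\mathcal L(c_0(I),X)}$ with $\Vert T(x)\Vert>1-\eta$, and let $y\in B_{c_0(I)}$ with $\supp(y)\subseteq I\setminus F$. The crucial observation is that, by the disjointness of supports in the sup-norm, $\Vert x\pm y\Vert=\max\{\Vert x\Vert,\Vert y\Vert\}=1$ (assuming $\Vert y\Vert\le 1$, which holds since $y\in B_{c_0(I)}$). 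Hence both $u:=T(x+y)$ and $v:=T(x-y)$ lie in $B_X$, while their average is exactly $\tfrac{u+v}{2}=T(x)$, so that $\left\Vert\tfrac{u+v}{2}\right\Vert=\Vert T(x)\Vert>1-\eta=1-\delta_X(\varepsilon)$. By the very definition of the modulus of convexity this forces $\Vert u-v\Vert<\varepsilon$, and since $u-v=2T(y)$ we obtain $\Vert T(y)\Vert<\tfrac{\varepsilon}{2}<\varepsilon$, which is exactly the desired conclusion.

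The one place requiring mild care is the normalisation: I may want $y$ to range over all of $B_{c_0(I)}$ rather than just $S_{c_0(I)}$, so I should note that the estimate for a unit-norm $y$ transfers to all $y\in B_{c_0(I)}$ by positive homogeneity, or simply observe directly that the argument above only used $\Vert y\Vert\le 1$ to keep $x\pm y$ on the unit sphere. I therefore do not expect any genuine obstacle here; the lemma is, as the author notes, essentially known, and the $c_0$ geometry makes the proof strictly cleaner than its $\ell_p$ counterpart, since the convexity of the sup-ball lets the unit-sized perturbation pass through $B_X$ for free. The only conceptual point worth flagging is that this is exactly the step that will later let the codomain $Y^*$ be merely uniformly convex, with no power-type hypothesis, in the $c_0(I)\pten Y$ theorem.
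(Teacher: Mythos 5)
Your proposal is correct and follows essentially the same route as the paper's proof: set $\eta:=\delta_X(\varepsilon)$, use the disjointness of supports in the sup-norm to see that $u:=T(x+y)$ and $v:=T(x-y)$ lie in $B_X$ with midpoint $T(x)$ of norm exceeding $1-\delta_X(\varepsilon)$, and conclude $\Vert u-v\Vert=2\Vert T(y)\Vert<\varepsilon$ from the definition of the modulus of convexity. Your remark that only $\Vert y\Vert\leq 1$ is needed (so the estimate holds on all of $B_{c_0(I)}$) is a valid and slightly more careful reading than the paper's, which nominally argues for $y\in S_{c_0(I)}$.
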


\begin{proof}
Take $\delta_{X}:(0,2)\longrightarrow \mathbb R^+$ the modulus of convexity of $X$, which is a strictly positive function since $X$ is uniformly convex.

Let $\varepsilon>0$ and select $\eta:=\delta_{X}(\varepsilon)$, and let us prove that it satisfies our requirements. To do so, let $x\in S_{c_0(I)}$ of finite support, say $F:=\supp(x)\subseteq I$ and let $T\in S_{\mathcal L(c_0(I),X)}$ be such that $\Vert T(x)\Vert>1-\eta(\varepsilon)$.

We claim that, given any $y\in S_{c_0(I)}$ such that $\supp(y)\subseteq I\setminus F$ we have $\Vert T(y)\Vert<\varepsilon$. To this end, set $u:=T(x+y)$ and $v:=T(x-y)$. We have that $\Vert u\Vert\leq \Vert x+y\Vert\leq 1$ since $x$ and $y$ have disjoint support. Similarly $\Vert v\Vert\leq 1$. Moreover
$$\left\Vert \frac{u+v}{2}\right\Vert=\Vert T(x)\Vert>1-\delta_{X}(\varepsilon).$$
By the definition of $\delta_{X}$ the unique possibility is that $$\varepsilon>\Vert u-v\Vert= 2\Vert T(y)\Vert,$$
so $\Vert T(y)\Vert<\varepsilon$, as requested.\end{proof}

Now a repetition of the proof of Theorem~\ref{theo:SSDlpyunifoconvexo}, making use of Lemma~\ref{lemma:c0sopofinuniconvex} instead of Theorem~\ref{theo:lpoperasopfin}, allows to conclude the following theorem. We omit the proof in order to avoid repetition of details.

\begin{theorem}\label{theo:SSDc0denso}
Let $I$ be an infinite set and let $Y$ be a finite-dimensional smooth Banach space. Then the norm of $c_0(I)\pten Y$ is SSD. Indeed, $(c_0(I)\pten Y)^*$ has the $w^*$-Kadec-Klee property. 
\end{theorem}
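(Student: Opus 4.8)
The plan is to mirror the proof of Theorem~\ref{theo:SSDlpyunifoconvexo} verbatim, substituting Lemma~\ref{lemma:c0sopofinuniconvex} for Theorem~\ref{theo:lpoperasopfin} at the single point where the quantitative disjoint-support estimate is invoked. First I would note that since $Y$ is finite-dimensional and smooth, its dual $Y^*$ is finite-dimensional and strictly convex, hence uniformly convex (strict convexity and uniform convexity coincide in finite dimensions by compactness). This is exactly the hypothesis needed to apply Lemma~\ref{lemma:c0sopofinuniconvex} with $X=Y^*$. I would then argue by contradiction: assume $(c_0(I)\pten Y)^*=\mathcal L(c_0(I),Y^*)$ fails the $w^*$-Kadec-Klee property, producing $T\in S_{\mathcal L(c_0(I),Y^*)}$ and a net $(T_s)\subseteq S_{\mathcal L(c_0(I),Y^*)}$ with $T_s\rightarrow^{w^*}T$ but $\Vert T_s-T\Vert>\varepsilon_0$ for all $s$, after passing to a subnet.

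Next, for the given $\varepsilon_0$, I would extract $\eta>0$ from Lemma~\ref{lemma:c0sopofinuniconvex} (applied with tolerance $\varepsilon_0/4$) so that whenever $x\in S_{c_0(I)}$ has finite support $F$ and $G\in S_{\mathcal L(c_0(I),Y^*)}$ satisfies $\Vert G(x)\Vert>1-\eta$, one has $\Vert G(y)\Vert<\varepsilon_0/4$ for every $y\in B_{c_0(I)}$ with $\supp(y)\subseteq I\setminus F$. Choosing a finitely-supported norm-attaining-ish $x$ with $\Vert T(x)\Vert>1-\eta$ and $y\in S_Y$ with $T(x)(y)>1-\eta$, the $w^*$-convergence $T_s(x\otimes y)\rightarrow T(x\otimes y)$ yields $s_1$ beyond which $\Vert T_s(x)\Vert>1-\eta$, so the lemma forces $\Vert T_s(y)\Vert<\varepsilon_0/4$ for all $y$ supported off $F$.

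The finite-dimensional reduction then proceeds exactly as before: the restrictions $S_s:=T_s\restricted{c_0(F)}$ converge $w^*$ to $S:=T\restricted{c_0(F)}$ in the finite-dimensional space $(c_0(F)\pten Y)^*$, hence converge in norm, giving $s_2\geq s_1$ with $\Vert S_s-S\Vert<\varepsilon_0/4$ for $s\geq s_2$. The \emph{only genuine difference} from the $\ell_p$ proof — and the one point I would flag as needing care — is the splitting of an arbitrary $z\in B_{c_0(I)}$ into $z_F$ (supported on $F$) and $z_C$ (supported on $I\setminus F$). Here the decomposition $c_0(I)=\ell_\infty(F)\oplus_\infty c_0(I\setminus F)$ replaces the $\ell_p$-sum, so instead of $\Vert z\Vert^p=\Vert z_F\Vert^p+\Vert z_C\Vert^p$ one has $\Vert z_F\Vert\leq\Vert z\Vert\leq 1$ and $\Vert z_C\Vert\leq\Vert z\Vert\leq 1$ from the max-norm; both pieces still lie in $B_{c_0(I)}$, which is all the estimate requires. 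With this adjustment, the triangle inequality gives $\Vert(T_s-T)(z)\Vert\leq\Vert(S_s-S)(z_F)\Vert+\Vert T_s(z_C)\Vert+\Vert T(z_C)\Vert<\tfrac{\varepsilon_0}{4}+\tfrac{\varepsilon_0}{4}+\tfrac{\varepsilon_0}{4}<\varepsilon_0$, contradicting $\Vert T_s(z)-T(z)\Vert>\varepsilon_0$. Hence $(c_0(I)\pten Y)^*$ has the $w^*$-Kadec-Klee property, and SSD of the norm of $c_0(I)\pten Y$ follows from \cite[Proposition 2.6]{dklmjmaa}. I expect no real obstacle beyond verifying that smoothness of $Y$ translates to uniform convexity of $Y^*$ and that the $\oplus_\infty$ splitting behaves as needed.
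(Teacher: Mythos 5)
Your proposal is correct and follows exactly the route the paper intends: the paper explicitly omits the proof of Theorem~\ref{theo:SSDc0denso}, stating that it is a repetition of the proof of Theorem~\ref{theo:SSDlpyunifoconvexo} with Lemma~\ref{lemma:c0sopofinuniconvex} in place of Theorem~\ref{theo:lpoperasopfin}, and you have filled in that repetition faithfully. You also correctly identify and resolve the only two points requiring care, namely that smoothness of the finite-dimensional $Y$ yields uniform convexity of $Y^*$, and that the $\oplus_\infty$ splitting of $z$ into $z_F+z_C$ still keeps both pieces in $B_{c_0(I)}$, which is all the final estimate needs.
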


\begin{remark}\label{remark:lppexamplesnew}
Observe that Theorems \ref{theo:SSDlpyunifoconvexo} and \ref{theo:SSDc0denso} yield examples of pairs of Banach spaces $X$ and $Y$ such that the pair $(X,Y)$ has the property $L_{p,p}$ for bilinear mappings in the case that $Y$ is finite dimensional which, up to our knownledge, were not previously known.

In the context of finite-dimensional Banach spaces, it is known that if both $X$ and $Y$ are finite-dimensional then the pair $(X,Y)$ has the $L_{p,p}$ for bilinear mappings \cite[Proposition 2.2]{dklm}. However, as far as we there is not any good description of when $(X,Y)$ has the $L_{p,p}$ for bilinear mappings if $X$ is finite-dimensional, with the exception of $X=\ell_1^n$ \cite[Theorem 3.3]{dklm}.
\end{remark}

Let $X$ and $Y$ be two Banach spaces. It is asked in \cite[Subsection 2.4, Q1)]{djmr24} whether $X\pten Y$ is SSD if and only if, the pair $(X,Y)$ has the $L_{p,p}$ for bilinear mappings. We do not know whether the answer is affirmative or negative. However, let us point out that, in order to get an SSD projective tensor product, we will need a strengthening of the $L_{p,p}$ condition to multiple points. 

In order to motivate this, consider an arbitrary Banach space $Z$ which is SSD. Assume that $z_1,\ldots, z_n\in S_Z$ are points such that there exists a common functional $f$ such that $f(z_i)=1$ holds for every $1\leq i\leq n$ (this condition is clearly equivalent by Hahn-Banach theorem to the fact that $\Vert z_1+\ldots+z_n\Vert=n$). Now set $z:=\frac{1}{n}\sum_{i=1}^n z_i\in S_Z$. Since $z$ is an SSD point then, given $\varepsilon>0$ there exists $\eta(\varepsilon)>0$ (which of course also depends on the point $z$) with the property that if $g\in B_{Z^*}$ is such that $g(z)>1-\eta(\varepsilon)$ then there exists $h\in S_{Z^*}$ such that $h(z)=1$ and $\Vert g-h\Vert<\varepsilon$.

This implies a multiple point SSD condition in the following sense: if $g\in B_{Z^*}$ is such that $g(z_i)>1-\eta(\varepsilon)$ holds for every $1\leq i\leq n$, then $g(z)>1-\eta(\varepsilon)$ by a convexity argument, so there exists $h\in S_{Z^*}$ with $h(z)=1=\frac{1}{n}\sum_{i=1}^n h(z_i)$ and $\Vert g-h\Vert<\varepsilon$. The condition $h(z)=1$ implies $h(z_i)=1$ to hold for every $1\leq i\leq n$. 

Summarising, we have obtained the following necessary condition: if a Banach space $Z$ is SSD then, given $z_1,\ldots, z_n\in S_Z$  such that there exists a functional $f\in S_{Z^*}$ such that $f(z_i)=1$ holds for every $1\leq i\leq n$ then, for every $\varepsilon>0$, there exists $\eta(\varepsilon)>0$ with the property that if $g\in B_{Z^*}$ is such that $g(z_i)>1-\eta(\varepsilon)$ holds for every $1\leq i\leq n$ then there exists $h\in S_{Z^*}$ such that $h(z_i)=1$ and $\Vert g-h\Vert<\varepsilon$.

Moving this to the context of the projective tensor product we can obtain a characterisation of the property that certain elements of the projective tensor product are SSD.

\begin{theorem}\label{theo:carassdpointfinito}
Let $X$ and $Y$ be two Banach space, let $n\in\mathbb N$ and $x_1,\ldots, x_n\in S_X$, $y_1,\ldots, y_n\in S_Y$ such that $\frac{1}{n}\left\Vert \sum_{i=1}^n x_i\otimes y_i\right\Vert=1$ (this condition is equivalent to the existence of a norm-one bilinear mapping $V$ such that $V(x_i,y_i)=1$ holds for every $1\leq i\leq n$). The following are equivalent:
\begin{enumerate}
\item For every $\lambda_1,\ldots,\lambda_n\in (0,1]$ such that $\sum_{i=1}^n\lambda_i=1$, the element $\sum_{i=1}^n \lambda_i x_i\otimes y_i$ is an SSD point of $S_{X\pten Y}$.
\item For every $\varepsilon>0$ there exists $\eta(\varepsilon)>0$ (also depending on the points $x_1,\ldots x_n, y_1,\ldots, y_n$) satisfying the following condition: if $B\in \mathcal B(X\times Y)$ satisfies $\Vert B\Vert\leq 1$ and $B(x_i,y_i)>1-\eta(\varepsilon)$ holds for every $1\leq i\leq n$ then there exists $\tilde B\in S_{\mathcal B(X\times Y)}$ satisfying that $\tilde B(x_i,y_i)=1$ holds for every $1\leq i\leq n$ and $\Vert B-\tilde B\Vert<\varepsilon$.
\end{enumerate}
\end{theorem}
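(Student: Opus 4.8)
The plan is to move everything to the dual $Z^* = (X\pten Y)^* = \mathcal B(X\times Y)$ via the isometric identification recalled in Subsection~\ref{subsection:projectensor}, and then to apply the characterisation of SSD points from \cite[Theorem 1.2]{fp}. Write $z_i := x_i\otimes y_i\in S_{X\pten Y}$. First I would dispose of the parenthetical reformulation: $\frac{1}{n}\Vert\sum_{i=1}^n z_i\Vert=1$ means $\Vert\sum_i z_i\Vert=n$, so Hahn--Banach gives $V\in S_{(X\pten Y)^*}$ with $\sum_i V(z_i)=n$; since each $V(z_i)\leq 1$ this forces $V(z_i)=1$ for all $i$ (the converse implication being trivial). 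For any convex combination $w:=\sum_i\lambda_i z_i$ with $\lambda_i\in(0,1]$ and $\sum_i\lambda_i=1$ we then get $V(w)=1$, hence $w\in S_{X\pten Y}$, so that condition (1) is meaningful. The whole proof is then a bookkeeping of inequalities against the Fabian--Preiss modulus.

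For (1) $\Rightarrow$ (2) I would use only SSD at the barycentre $z:=\frac{1}{n}\sum_i z_i$, following the heuristic preceding the statement. Given $\varepsilon>0$, let $\eta(\varepsilon)>0$ be the modulus furnished by \cite[Theorem 1.2]{fp} for the SSD point $z$. If $B\in\mathcal B(X\times Y)$ has $\Vert B\Vert\leq 1$ and $B(x_i,y_i)>1-\eta(\varepsilon)$ for all $i$, then regarding $B$ as $g\in B_{(X\pten Y)^*}$ with $g(z_i)=B(x_i,y_i)$, convexity gives $g(z)=\frac{1}{n}\sum_i g(z_i)>1-\eta(\varepsilon)$. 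The SSD characterisation yields $h\in S_{(X\pten Y)^*}$ with $h(z)=1$ and $\Vert g-h\Vert<\varepsilon$. Since $h(z)=\frac{1}{n}\sum_i h(z_i)=1$ with each $h(z_i)\leq 1$, every term equals $1$; translating $h$ back to $\tilde B\in S_{\mathcal B(X\times Y)}$ delivers $\tilde B(x_i,y_i)=1$ for all $i$ and $\Vert B-\tilde B\Vert<\varepsilon$, which is (2).

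For (2) $\Rightarrow$ (1) I would fix a convex combination $w=\sum_i\lambda_i z_i$ and verify the SSD characterisation of \cite[Theorem 1.2]{fp} at $w$. Set $\lambda_{\min}:=\min_i\lambda_i>0$. Given $\varepsilon>0$, take the $\eta(\varepsilon)$ provided by (2) and put $\eta':=\lambda_{\min}\,\eta(\varepsilon)$. If $g\in B_{(X\pten Y)^*}$ satisfies $g(w)>1-\eta'$, then from $\sum_i\lambda_i(1-g(z_i))=1-g(w)<\eta'$ together with $1-g(z_i)\geq 0$ I obtain $\lambda_i(1-g(z_i))<\eta'$, hence $g(z_i)>1-\eta'/\lambda_{\min}=1-\eta(\varepsilon)$ for every $i$. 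Applying (2) to the bilinear map $B$ corresponding to $g$ produces $\tilde B\in S_{\mathcal B(X\times Y)}$ with $\tilde B(x_i,y_i)=1$ and $\Vert B-\tilde B\Vert<\varepsilon$; the functional $h$ corresponding to $\tilde B$ lies in $S_{(X\pten Y)^*}$, satisfies $h(w)=\sum_i\lambda_i h(z_i)=1$ and $\Vert g-h\Vert<\varepsilon$. This is exactly the condition making $w$ an SSD point.

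I do not expect a genuine obstacle, as both implications reduce to convexity estimates combined with the Fabian--Preiss criterion. The one point requiring care is that in (2) $\Rightarrow$ (1) the SSD modulus $\eta'$ at $w$ depends on $\lambda_{\min}$, i.e.\ on the particular convex combination. This is harmless because SSD at a point only requires \emph{some} point-dependent modulus, but it does explain why the single modulus $\eta(\varepsilon)$ from (2) cannot be transported verbatim to every $w$ and must be rescaled by $\lambda_{\min}$; conversely, it is why (1) $\Rightarrow$ (2) needs SSD only at the barycentre, where no such rescaling occurs.
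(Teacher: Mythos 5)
Your proposal is correct and follows essentially the same route as the paper: both implications are handled by passing to $(X\pten Y)^*=\mathcal B(X\times Y)$, using the SSD modulus at the barycentre for (1)$\Rightarrow$(2) and rescaling by $\min_i\lambda_i$ for (2)$\Rightarrow$(1), where your direct estimate $\sum_i\lambda_i(1-g(z_i))<\eta'$ is just a streamlined version of the paper's argument by contradiction. One cosmetic slip: the characterisation you invoke, \cite[Theorem 1.2]{fp}, is due to Franchetti and Pay\'a, not Fabian--Preiss.
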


\begin{proof}
(1)$\Rightarrow$(2). Let $\varepsilon>0$. Let us find $\eta(\varepsilon)>0$ enjoying the condition that if $B\in B_{\mathcal B(X\times Y)}$ is such that $B(x_i,y_i)>1-\eta(\varepsilon)$ holds for $1\leq i\leq n$ then we can find $\tilde B\in S_{\mathcal B(X\times Y)}$ with $\tilde B(x_i,y_i)=1$ holds for $1\leq i\leq n$ and $\Vert B-\tilde B\Vert<\varepsilon$.

Since $z:=\frac{1}{n}\sum_{i=1}^n x_i\otimes y_i$ is an SSD point of $S_{X\pten Y}$ by (1) we can find $\eta(\varepsilon)>0$ such that if $B\in B_{\mathcal B(X\times Y)}$ satisfies that 
$$1-\eta(\varepsilon)<B\left( z\right)=\frac{1}{n}\sum_{i=1}^n B(x_i,y_i)$$
then there exists $\tilde B\in S_{\mathcal B(X\times Y)}$ with $\tilde B(z)=1$ and $\Vert B-\tilde B\Vert<\varepsilon$.

Let us prove that $\eta(\varepsilon)$ satisfies our purposes. Indeed, if $B\in B_{\mathcal B(X\times Y)}$ satisfies that $B(x_i,y_i)>1-\eta(\varepsilon)$ for every $1\leq i\leq n$ then 
$$B(z)=\frac{1}{n}\sum_{i=1}^n B(x_i,y_i)>\frac{1}{n}\sum_{i=1}^n (1-\eta(\varepsilon))=1-\eta(\varepsilon).$$
By the property defining $\eta(\varepsilon)$ we can find $\tilde B\in S_{\mathcal B(X\times Y)}$ with $\tilde B(z)=1$ and $\Vert B-\tilde B\Vert<\varepsilon$. It remains to prove that $\tilde B(x_i,y_i)=1$. But this holds true since
$$1=\tilde B(z)=\frac{1}{n}\sum_{i=1}^n \tilde B(x_i,y_i),$$
and since $\tilde B(x_i,y_i)\leq \Vert \tilde B\Vert=1$, an easy convexity argument together with the above equality implies $\tilde B(x_i,y_i)=1$ holds for every $1\leq i\leq n$, as desired.

(2)$\Rightarrow$(1). Fix $\lambda_1,\ldots, \lambda_n\in (0,1]$ such that $\sum_{i=1}^n \lambda_i=1$ and $z:=\sum_{i=1}^n \lambda_i x_i\otimes y_i$, and let us prove that it is an SSD point. To this end, let $\varepsilon>0$ and let us find $\delta(\varepsilon)>0$ such that if $B\in B_{\mathcal B(X\times Y)}$ satisfies $B(z)>1-\delta(\varepsilon)$ then we can find $\tilde B\in S_{\mathcal B(X\times Y)}$ with $\tilde B(z)=1$ and $\Vert B-\tilde B\Vert<\varepsilon$. 

By (2) there exists $\eta(\varepsilon)>0$ (also depending on the points $x_1,\ldots x_n, y_1,\ldots, y_n$) enjoying the condition that if $B\in B_{\mathcal B(X\times Y)}$ is such that $B(x_i,y_i)>1-\eta(\varepsilon)$ holds for $1\leq i\leq n$ then we can find $\tilde B\in S_{\mathcal B(X\times Y)}$ with $\tilde B(x_i,y_i)=1$ holds for $1\leq i\leq n$ and $\Vert B-\tilde B\Vert<\varepsilon$.

Let $\lambda:=\min_{1\leq i\leq n}\lambda_i>0$ and define $\delta(\varepsilon):=\lambda\eta(\varepsilon)$, which is strictly positive. Let us prove that $\delta(\varepsilon)$ satisfies our requirements. 

To this end, let $B\in B_{\mathcal B(X\times Y)}$ such that $B(z)>1-\delta(\varepsilon)$. We claim that $B(x_i,y_i)>1-\eta(\varepsilon)$ holds for every $1\leq i\leq n$. Indeed, assume by contradiction that $B(x_i,y_i)\leq 1-\eta(\varepsilon)$ for some $1\leq i\leq n$. Then
\[\begin{split}
1-\lambda\eta(\varepsilon)<B(z)=\sum_{j=1}^n \lambda_j B(x_j,y_j)& =\lambda_i B(x_i,y_i)+\sum_{j\neq i}\lambda_j B(x_j,y_i)\\
& \leq \lambda_i (1-\eta(\varepsilon))+\sum_{j\neq i}\lambda_j\\
& =\lambda_i(1-\eta(\varepsilon))+1-\lambda_i\\
& =1-\lambda_i\eta(\varepsilon).\end{split}\]
The above inequality implies $\lambda_i \eta(\varepsilon)<\lambda \eta(\varepsilon)$ which implies $\lambda_i<\lambda=\min\{\lambda_1,\ldots, \lambda_n\}$, a contradiction. This proves $B(x_i,y_i)>1-\eta(\varepsilon)$ for every $1\leq i\leq n$. The condition defining $\eta(\varepsilon)$ yields the existence of $\tilde B\in S_{\mathcal B(X\times Y)}$ with $\tilde B(x_i,y_i)=1$ for every $1\leq i\leq n$ and $\Vert B-\tilde B\Vert<\varepsilon$. It remains to prove that $\tilde B(z)=1$. But this is clear because
$$\tilde B(z)=\tilde B\left(\sum_{i=1}^n \lambda_i x_i\otimes y_i \right)=\sum_{i=1}^n \lambda_i \tilde B(x_i\otimes y_i)=\sum_{i=1}^n \lambda_i B(x_i,y_i)=\sum_{i=1}^n \lambda_i =1,$$
which concludes (1).
\end{proof}

\begin{remark}
In view of the above theorem, if a projective tensor product $X\pten Y$ is SSD, then the pair $(X,Y)$ satisfies the following: for every $(x_1,y_1), \ldots, (x_n,y_n)\in S_X\times S_Y$ such that there exists a norm-one bilinear map $G\in \mathcal B(X\times Y)$ such that $G(x_i,y_i)=1$ holds for every $1\leq i\leq n$ the following condition holds: for every $\varepsilon>0$ there exists $\eta(\varepsilon)>0$ such that if a bilinear mapping $B\in B_{\mathcal B(X\times Y)}$ satisfies $B(x_i,y_i)>1-\eta(\varepsilon)$ then there exists $\tilde B\in S_{\mathcal B(X\times Y)}$ such that $B(x_i,y_i)=1$ holds for every $1\leq i\leq n$ and $\Vert B-\tilde B\Vert<\varepsilon$. 

On the other hand obverve that, in virtue of Theorems~\ref{theo:SSDlpyunifoconvexo} and \ref{theo:SSDc0denso}, the pair $(X,Y)$ satisfies the above property if $X=\ell_p(I)$ for $p>2$ and $Y$ is finite dimensional such that $Y^*$ has modulus of convexity of power type $q<2$ or if $X=c_0(I)$ and $Y$ is any finite dimensional smooth Banach space.
\end{remark}

Let us finish the paper with a transfinite version of Theorem~\ref{theo:carassdpointfinito}.

\begin{theorem}\label{theo:carassdpointinfinito}
Let $X$ and $Y$ be two Banach spaces and set $\{x_n\}\subseteq S_X$ and $\{y_n\}\subseteq S_Y$ such that $\left\Vert \sum_{n=1}^\infty \frac{1}{2^n}x_n\otimes y_n\right\Vert=1$ (this condition is equivalent to the existence of a norm-one bilinear mapping $V$ such that $V(x_n,y_n)=1$ holds for every $n\in\mathbb N$). The following are equivalent:
\begin{enumerate}
\item $\sum_{n=1}^\infty \lambda_n x_n\otimes y_n$ is an SSD point of $S_{X\pten Y}$ for every $\lambda_n\in (0,1]$ such that $\sum_{n=1}^\infty \lambda_n=1$.
\item For every $\varepsilon>0$ there exist $k_\varepsilon\in\mathbb N$ and $\eta(\varepsilon)>0$ such that if $B\in B_{\mathcal B(X\times Y)}$ satisfies $B(x_n,y_n)>1-\delta(\varepsilon)$ holds for every $1\leq n\leq k_\varepsilon$ then there exists $\tilde B\in S_{\mathcal B(X\times Y)}$ such that $\Vert B-\tilde B\Vert<\varepsilon$ and $B(x_n,y_n)=1$ holds for every $n\in\mathbb N$.
\end{enumerate}
\end{theorem}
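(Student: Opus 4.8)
The plan is to mirror the proof of Theorem~\ref{theo:carassdpointfinito}, with two adaptations forced by the passage from finitely many points to a countable family: in the forward implication a tail estimate on the geometric weights $2^{-n}$ will let me replace the full-series condition by a finite one, and in the backward implication the relevant minimum of weights will be taken over a \emph{finite} index set so as to stay strictly positive. As in the finite case I read the conclusion of (2) as ``$\tilde B(x_n,y_n)=1$ for every $n$'' and its hypothesis as ``$B(x_n,y_n)>1-\eta(\varepsilon)$''.

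For (1)$\Rightarrow$(2), I would fix $\varepsilon>0$ and set $z:=\sum_{n=1}^\infty 2^{-n}x_n\otimes y_n$, which by hypothesis lies in $S_{X\pten Y}$ and, by (1) applied to $\lambda_n=2^{-n}$, is an SSD point; let $\gamma(\varepsilon)>0$ be its SSD modulus, so that $B(z)>1-\gamma(\varepsilon)$ for $B\in B_{\mathcal B(X\times Y)}$ produces $\tilde B\in S_{\mathcal B(X\times Y)}$ with $\tilde B(z)=1$ and $\Vert B-\tilde B\Vert<\varepsilon$. If $B(x_n,y_n)>1-\eta$ for $1\le n\le k$, then splitting $B(z)=\sum_{n=1}^k 2^{-n}B(x_n,y_n)+\sum_{n>k}2^{-n}B(x_n,y_n)$ and bounding each tail term below by $-1$ yields $B(z)>(1-\eta)(1-2^{-k})-2^{-k}\ge 1-2^{1-k}-\eta$. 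I would then choose $k_\varepsilon$ with $2^{1-k_\varepsilon}<\gamma(\varepsilon)/2$ and set $\eta(\varepsilon):=\gamma(\varepsilon)/2$, so that the hypothesis for $1\le n\le k_\varepsilon$ forces $B(z)>1-\gamma(\varepsilon)$. The resulting $\tilde B$ satisfies $1=\tilde B(z)=\sum_{n=1}^\infty 2^{-n}\tilde B(x_n,y_n)$ with each $\tilde B(x_n,y_n)\le 1$, so since the weights sum to $1$ the convexity argument of Theorem~\ref{theo:carassdpointfinito} gives $\tilde B(x_n,y_n)=1$ for \emph{every} $n$, which is (2).

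For (2)$\Rightarrow$(1), I would fix weights $\lambda_n\in(0,1]$ with $\sum_n\lambda_n=1$ and put $z:=\sum_{n=1}^\infty\lambda_n x_n\otimes y_n$; testing against the norm-one $V$ with $V(x_n,y_n)=1$ gives $V(z)=1$, so $z\in S_{X\pten Y}$. Given $\varepsilon>0$, take $k_\varepsilon,\eta(\varepsilon)$ from (2) and set $\lambda:=\min_{1\le n\le k_\varepsilon}\lambda_n>0$, the minimum being strictly positive because it runs over finitely many indices. I claim $\delta(\varepsilon):=\lambda\,\eta(\varepsilon)$ witnesses the SSD property of $z$: if $B\in B_{\mathcal B(X\times Y)}$ has $B(z)>1-\delta(\varepsilon)$ but $B(x_m,y_m)\le 1-\eta(\varepsilon)$ for some $m\le k_\varepsilon$, isolating the $m$-th term gives $B(z)\le\lambda_m(1-\eta(\varepsilon))+(1-\lambda_m)=1-\lambda_m\eta(\varepsilon)\le 1-\lambda\eta(\varepsilon)$, a contradiction. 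Hence $B(x_n,y_n)>1-\eta(\varepsilon)$ for all $1\le n\le k_\varepsilon$, and (2) supplies $\tilde B\in S_{\mathcal B(X\times Y)}$ with $\Vert B-\tilde B\Vert<\varepsilon$ and $\tilde B(x_n,y_n)=1$ for every $n$, whence $\tilde B(z)=\sum_{n=1}^\infty\lambda_n\tilde B(x_n,y_n)=1$, proving $z$ is SSD.

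The routine ingredients (the equivalence of the norm condition with the existence of $V$, and the convexity step forcing $\tilde B(x_n,y_n)=1$) are identical to the finite case. The only genuinely new point, and the one I would be most careful about, is the interplay between the two truncations: the forward direction needs the geometric decay of $2^{-n}$ to make the tail negligible with a \emph{uniform} $k_\varepsilon$, while the backward direction crucially relies on taking $\min\lambda_n$ over the finite range $n\le k_\varepsilon$ rather than over all $n$ (the full infimum could be $0$, e.g.\ for $\lambda_n=2^{-n}$, which would destroy the estimate). This is exactly why (2) is phrased with a finite threshold $k_\varepsilon$ rather than a condition on all coordinates simultaneously.
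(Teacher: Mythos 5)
Your proposal is correct and follows essentially the same route as the paper's proof: the forward direction uses the SSD modulus of $z=\sum_n 2^{-n}x_n\otimes y_n$ together with a geometric tail estimate to produce $k_\varepsilon$ and $\eta(\varepsilon)$, and the backward direction takes $\lambda=\min_{1\le n\le k_\varepsilon}\lambda_n$ and $\delta(\varepsilon)=\lambda\eta(\varepsilon)$, differing only cosmetically (you derive the contradiction from a single bad index $m\le k_\varepsilon$ where the paper bounds $\sum_{n\in P}\lambda_n$ over the whole set of bad indices). You also correctly read the statement's $\delta$/$\eta$ mismatch and the ``$B(x_n,y_n)=1$'' in (2) as typos for $\eta(\varepsilon)$ and $\tilde B(x_n,y_n)=1$, which is exactly how the paper's proof treats them.
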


\begin{proof}
(1)$\Rightarrow$(2). Let $\varepsilon>0$ and define $z:=\sum_{n=1}^\infty \frac{1}{2^n}x_n\otimes y_n$. Since $z$ is an SSD point of $S_{X\pten Y}$ there exists $\delta(\varepsilon)>0$ such that if $B\in B_{\mathcal B(X\times Y)}$ satisfies $B(z)>1-\delta(\varepsilon)$ then there exists $\tilde B\in S_{\mathcal B(X\times Y)}$ with $\tilde B(z)=1$ and $\Vert B-\tilde B\Vert<\varepsilon$.

Select $k_\varepsilon\in\mathbb N$ big enough to guarantee $\sum_{n=1}^{k_\varepsilon}\frac{1}{2^n}-\sum_{n=k_\varepsilon+1}^\infty\frac{1}{2^n}>1-\delta(\varepsilon)$. The strict inequality allows to find $\eta(\varepsilon)$ small enoguh to get
$$(1-\eta(\varepsilon))\sum_{n=1}^{k_\varepsilon}\frac{1}{2^n}-\sum_{n=k_\varepsilon+1}^\infty \frac{1}{2^n}>1-\delta(\varepsilon).$$
Let us prove that $k_\varepsilon$ and  $\eta(\varepsilon)$ satisfies our requirements. In order to do so select $B\in B_{\mathcal B(X\times Y)}$ such that $B(x_n,y_n)>1-\eta(\varepsilon)$ holds for every $1\leq n\leq k_\varepsilon$. Then
\[\begin{split}
B(z)& =\sum_{n=1}^{k_\varepsilon} \frac{1}{2^n} B(x_n,y_n)+\sum_{n=k_\varepsilon+1}^\infty \frac{1}{2^n}B(x_n,y_n)\\
& >(1-\eta(\varepsilon))\sum_{n=1}^{k_\varepsilon}\frac{1}{2^n}-\sum_{n=k_\varepsilon+1}^\infty \frac{1}{2^n}>1-\delta(\varepsilon).
\end{split}\]
The property defining $\delta(\varepsilon)$ implies the existence of $\tilde B\in S_{\mathcal B(X\times Y)}$ such that $\tilde B(z)=1$ and $\Vert B-\tilde B\Vert<\varepsilon$. Since $1=B(z)=\sum_{n=1}^\infty \frac{1}{2^n}B(x_n,y_n)$ and $\sum_{n=1}^\infty \frac{1}{2^n}=1$, a convexity argument implies $B(x_n,y_n)=1$ holds for every $n\in\mathbb N$, concluding (2).

To prove (2)$\Rightarrow$(1), choose $\lambda_n\in (0,1]$ for every $n\in\mathbb N$ such that $\sum_{n=1}^\infty \lambda_n=1$, write $z=\sum_{n=1}^\infty \lambda_n x_n\otimes y_n$ and let us prove that $z$ is an SSD point of $S_{X\pten Y}$. In order to do so, let $\varepsilon>0$ and let us find $\eta(\varepsilon)>0$ with the property that if $B\in B_{\mathcal B(X\times Y)}$ is such that $B(z)>1-\eta(\varepsilon)$ then there exists $\tilde B\in S_{\mathcal B(X\times Y)}$ with $\tilde B(z)=1$ and $\Vert B-\tilde B\Vert<\varepsilon$. 

By condition (2) there exists $k_\varepsilon\in\mathbb N$ and $\delta(\varepsilon)>0$ with the property that if $B\in B_{\mathcal B(X\times Y)}$ satisfies $B(x_n,y_n)>1-\delta(\varepsilon)$ for every $1\leq n\leq k_\varepsilon$ then there exists $\tilde B\in S_{\mathcal B(X\times Y)}$ such that $\Vert B-\tilde B\Vert<\varepsilon$ and $\tilde B(x_n,y_n)=1$ holds for every $n\in\mathbb N$.

Set $\lambda:=\min_{1\leq i\leq k_\varepsilon}\lambda_i>0$ and take $\eta(\varepsilon):=\lambda\delta(\varepsilon)$. Let us prove that $\eta(\varepsilon)$ satisfies the requirements. In order to do so, let $B\in B_{\mathcal B(X\times Y)}$ such that $B(z)>1-\eta(\varepsilon)$. Let us prove that $B(x_n,y_n)>1-\delta(\varepsilon)$ holds for every $1\leq n\leq k_\varepsilon$. To do so define
$$P:=\{n\in\mathbb N: B(x_n,y_n)\leq 1-\delta(\varepsilon)\}.$$
We have
\[
\begin{split}
1-\eta(\varepsilon)=1-\lambda\delta(\varepsilon)& <B(z)=\sum_{n=1}^\infty \lambda_n B(x_n,y_n)\\
& =\sum_{n\in P}\lambda_n B(x_n,y_n)+\sum_{n\notin P}\lambda_n B(x_n,y_n)\\
& \leq (1-\delta(\varepsilon))\sum_{n\in P}\lambda_n +\sum_{n\notin P}\lambda_n\\
& =(1-\delta(\varepsilon))\sum_{n\in P}\lambda_n +1-\sum_{n\in P}\lambda_n\\
& =1-\delta(\varepsilon)\sum_{n\in P}\lambda_n.
\end{split}
\]
This implies that $\sum_{n\in P}\lambda_n<\lambda=\min\{\lambda_1,\ldots, \lambda_{k_\varepsilon}\}$. From this inequality we infer that $\{1,\ldots, k_\varepsilon\}\cap P=\emptyset$, in other words, $B(x_n,y_n)>1-\delta(\varepsilon)$ holds for every $1\leq n\leq k_\varepsilon$. By the property defining $\delta(\varepsilon)$ there exists $\tilde B\in S_{\mathcal B(X\times Y)}$ such that $\tilde B(x_n,y_n)=1$ holds for every $n\in\mathbb N$ and $\Vert B-\tilde B\Vert<\varepsilon$. This finishes the proof since $\tilde B(z)=1$. Indeed
$$\tilde B(z)=\sum_{n=1}^\infty \lambda_n \tilde B(x_n,y_n)=\sum_{n=1}^\infty \lambda_n=1,$$
and the proof is finished.
\end{proof}

\begin{remark}
According to \cite[Definition 2.1]{djrr22}, an element $z\in X\pten Y$ is said to \textit{attain its projective norm} if there exists a sequence $(x_n)_n$ in $X$ and $(y_n)_n$ in $Y$ such that $\Vert z\Vert=\sum_{n=1}^\infty \Vert x_n\Vert \Vert  y_n\Vert$ and $z=\sum_{n=1}^\infty x_n\otimes y_n$. We denote $\NA_\pi(X\pten Y)$ the set of those $z$ which attain their projective norm.
Observe that for all $z\in \NA_\pi(X\pten Y)$ we can write $z=\sum_{n=1}^\infty \lambda_n x_n\otimes y_n$, where $x_n\in S_X, y_n\in S_Y$ for every $n\in\mathbb N$ and $\lambda_n \in \mathbb R^+$ satisfy $\sum_{n=1}^\infty \lambda_n=\Vert z\Vert$. In such case, given any $T\in  \mathcal L(X,Y^*)=(X\pten Y)^*$ such that $T(z)=\Vert z\Vert$ and $\Vert T\Vert=1$, a convexity argument implies that $T(x_n)(y_n)=\norm{T(x_n)}=\norm{T}=1$ for every $n\in\mathbb N$. We refer the reader to \cite{dgljrz23,djmr24,djrr22,ggr24,rueda23} for background on projective norm-attainment.

Observe that Theorem~\ref{theo:carassdpointinfinito} could be a potential tool in order to get examples of projective tensor products $X\pten Y$ for which the set of SSD points is dense.  Indeed, it turns out to be a characterisation of when a norm-attaining element $z\in \NA_\pi(X\pten Y)$ is SSD in terms of a $L_{p,p}$ condition involving the pairs of points of an optimal representation. On the other hand, the phenomenon that $\NA_\pi(X\pten Y)$ is dense in $X\pten Y$ is quite frequent. As a matter of fact, in \cite[Theorem 4.8]{djrr22} it is proved that if $X$ and $Y$ have the metric $\pi$-property then $\NA_\pi(X\pten Y)$ is dense in $X\pten Y$.
\end{remark}

\section*{Acknowledgements}  

The author is deeply grateful to Sheldon Dantas for a number of comments, questions and remarks that have sensibly improved the exposition.

This work was supported by MCIN/AEI/10.13039/501100011033: grant PID2021-122126NB-C31, Junta de Andaluc\'ia: grant FQM-0185, by Fundaci\'on S\'eneca: ACyT Regi\'on de Murcia: grant 21955/PI/22 and by Generalitat Valenciana: grant CIGE/2022/97.

\end{document}